\theoremstyle{plain}
\newtheorem*{lemma}{Lemma}
\newtheorem*{thm}{Theorem}
\begin{document}

\title[]{Rigidity of Kantorovich Solutions\\ in Discrete Optimal Transport}

\author[]{ Alexander Bruce Johnson}
\address{Department of Applied Mathematics, University of Washington, Seattle, USA} 
\email{verano13@uw.edu }

\author[]{Stefan Steinerberger}
\address{Department of Mathematics, University of Washington, Seattle, USA} 
\email{steinerb@uw.edu}

\thanks{\textbf{Note added.} We found out that these ideas are (essentially) contained in \cite[Section 3]{peyre0}. This manuscript is not submitted anywhere, we hope it serves as an interesting exposition. In particular, the main question raised in Section 3 remains open.}
\date{}
\subjclass[2020]{49Q22, 90C46} 
\keywords{Birkhoff-von Neumann Theorem, Kantorovich regularity}

\begin{abstract} We study optimal transport plans from $m$ equally weighted points (with weights $1/m$) to $n$ equally weighted points (with weights $1/n$). The Birkhoff-von Neumann Theorem implies that if $m=n$, then the optimal transport plan can be realized by a bijective map: the mass from each $x_i$ is sent to a unique $y_j$. This is impossible when $m \neq n$, however, a certain degree of rigidity prevails. We prove, assuming w.l.o.g. $m < n$, that for generic transport costs the optimal transport plan sends mass from each source $x_i$ to $n/m \leq \mbox{different targets} \leq  n/m + m-1$. Moreover, the average target receives mass from $\leq 1 + m/\sqrt{n}$ sources. Stronger results might be true: in experiments, one observes that each source tends to distribute its mass over roughly $n/m +c$ different targets where $c$ appears to be rather small.
\end{abstract}

\maketitle

\textbf{Note added.} We discovered that a stronger result is known \cite[Proposition 3.4]{cuturi}: it is actually known that there is a transport map with $m+n-1$ nonzero entries. Since, in our setting, each source has to map to at least $\left\lceil n/m \right\rceil$ different targets, we have that, assuming $x_i$ sends mass to $\left\lceil n/m \right\rceil + \varepsilon_i$ targets, that 
$$ n + \sum_{i=1}^{m} \varepsilon_i \leq  \sum_{i=1}^{m} \left\lceil n/m \right\rceil + \varepsilon_i \leq n+m - 1$$
which implies our result and also explains why we typically observe $\left\lceil n/m \right\rceil + c$ (with $c$ small). We hope the reader may nonetheless enjoy some of the illustrations.

\section{Introduction}
\subsection{Discrete Optimal Transport} The goal of this paper is to introduce and discuss a rigidity phenomenon in discrete Optimal Transport. We assume that $X = \left\{x_1, \dots, x_m\right\}$ is a set of sources that are all assumed to have the same weight which is normalized to be $1/m$. This set is matched to $n$ targets $Y = \left\{y_1, \dots, y_n\right\}$ which all have the same `capacity' normalized to be $1/n$. The cost of transporting mass from $x_i$ to $y_j$ is given by $c(x_i, y_j)$ and these transport costs encode the entire underlying geometry. We are interested in the optimal transport solution sending mass from the sources to the targets. Since everything is discrete, one can rewrite this problem in terms of matrices. Using $P_{ij}$ to denote the amount of mass sent from $x_i$ to $y_j$, we are interested in the  matrix $P \in \mathbb{R}_{\geq 0}^{m \times n}$ solving
$$ \sum_{i=1}^{m} \sum_{j=1}^{n} c(x_i, y_j) \cdot P_{ij} \rightarrow \min \qquad \mbox{subject to} \qquad \sum_{j=1}^{n} P_{ij} = \frac{1}{m}, \sum_{i=1}^{m} P_{ij} = \frac{1}{n}.$$
The restrictions ensure that each source sends all its measure to some targets and that each target receives mass corresponding exactly to its capacity.
In the particularly interesting case of trying to minimize the Wasserstein $W_p$ distance, we would have $c(x_i, y_j) = \|x_i - y_j\|^p$.
 A crucial aspect of the problem is that mass can be split: the mass located at a source may be split into different pieces each of which gets sent to a different target. This is the Kantorovich formulation of Optimal Transport.
Summarizing, we are moving mass from each source point so that every target is filled to capacity in such a way that the overall transport cost is minimized; mass is allowed to be split and sent to different targets.

\subsection{Birkhoff-von Neumann}
A classical result states that bistochastic matrices are the convex hull of permutation matrices. This has the following beautiful implication for discrete Optimal Transport (see \cite{birk, brezis, konig, merigot, peyre, von}). 

\begin{thm}[Birkhoff-von Neumann] When $m=n$, then there exists an optimal transport plan that is a \textit{bijective} map between $X$ and $Y$.
\end{thm}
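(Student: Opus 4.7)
The plan is to combine the classical Birkhoff-von Neumann theorem on doubly stochastic matrices with the basic fact that a linear functional on a compact convex polytope attains its minimum at a vertex.

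First, when $m=n$ the set of admissible transport plans is exactly $\frac{1}{n}\mathcal{B}_n$, where $\mathcal{B}_n \subset \mathbb{R}^{n \times n}$ is the Birkhoff polytope of $n \times n$ doubly stochastic matrices (the row/column sum constraints become $\sum_j (nP)_{ij} = \sum_i (nP)_{ij} = 1$). This set is nonempty, compact and convex, so the continuous linear cost functional $P \mapsto \sum_{i,j} c(x_i, y_j) P_{ij}$ attains its minimum, say at some $P^{*}$.

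Second, I invoke the classical version of Birkhoff-von Neumann: the extreme points of $\mathcal{B}_n$ are precisely the $n \times n$ permutation matrices. After rescaling, the extreme points of $\frac{1}{n}\mathcal{B}_n$ are the $n!$ matrices of the form $\frac{1}{n}\Pi_\sigma$ with $\sigma \in S_n$, and any such matrix encodes a bijection: the source $x_i$ sends its entire mass $1/n$ to the single target $y_{\sigma(i)}$.

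Third, the elementary LP step: the optimizer $P^{*}$ admits a Carath\'eodory-type decomposition $P^{*} = \sum_k \lambda_k \frac{1}{n} \Pi_{\sigma_k}$ with $\lambda_k \geq 0$ and $\sum_k \lambda_k = 1$. Linearity of the cost gives
$$ \mathrm{cost}(P^{*}) \;=\; \sum_k \lambda_k \, \mathrm{cost}\!\left(\tfrac{1}{n}\Pi_{\sigma_k}\right), $$
so at least one index $k$ satisfies $\mathrm{cost}(\tfrac{1}{n}\Pi_{\sigma_k}) \leq \mathrm{cost}(P^{*})$; by optimality of $P^{*}$ equality holds, so $\tfrac{1}{n}\Pi_{\sigma_k}$ is itself optimal and bijective.

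The main obstacle, if one wants a self-contained treatment rather than invoking Birkhoff-von Neumann as a black box, is the characterization of the extreme points of $\mathcal{B}_n$. The standard argument takes a bistochastic matrix $Q$ with an entry strictly in $(0,1)$, uses a Hall/matching argument on its bipartite support graph to locate a cycle of positive entries, and then perturbs the entries alternately by $\pm \varepsilon$ around that cycle to write $Q$ as a nontrivial convex combination of two bistochastic matrices, contradicting extremality.
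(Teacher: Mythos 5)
Your proposal is correct and follows essentially the same route as the paper: identify the feasible set as the (rescaled) Birkhoff polytope, invoke the classical Birkhoff--von Neumann characterization of its extreme points as permutation matrices, and use linearity of the cost to conclude that an optimal plan may be taken at a vertex, i.e.\ a bijection. Your explicit handling of the $1/n$ rescaling and the Carath\'eodory-style averaging step simply spells out details the paper leaves implicit.
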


When the number of sources and targets coincide, then the most efficient way of transporting mass does not involve mass splitting: the mass from each source is sent to exactly one target. The Theorem does not require any assumptions about the cost of transporting mass from $x_i$ to $y_j$. It has a clear conceptual proof. When $m=n$, the matrices in our problem are bistochastic matrices. We are thus dealing with an affine cost function over bistochastic matrices. A theorem of Birkhoff \cite{birk} (also attributed to K\"onig \cite{konig} and von Neumann \cite{von}) says that the bistochastic matrices are the convex hull of the permutation matrices. The minimum of an affine cost function in a non-empty polyhedron is attained in an extremal point (a permutation matrix) and this implies the result independently of transport costs.
The solution need not be unique, non-bijective optimal transport plans can exist.

\subsection{The case $m\neq n$} It is clear that this entire argument breaks down when $m \neq n$. However, in some cases, as was observed in \cite{bamdad}, some regularity prevails.

\begin{thm}[Hosseini-Steinerberger]
  There exists an optimal transport plan such that sources are moved to at most $n/\mathrm{gcd}(m,n)$ different targets and each target receives mass from at most $m/\mathrm{gcd}(m,n)$ different sources.
\end{thm}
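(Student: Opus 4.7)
The plan is to reduce the problem to the classical Birkhoff--von Neumann setting by a subdivision trick. Let $d = \gcd(m,n)$, write $m = d m'$ and $n = d n'$, and set $M = \mathrm{lcm}(m,n) = m n' = n m'$. I would split each source $x_i$ into $n'$ identical sub-sources $x_i^{(1)}, \dots, x_i^{(n')}$, each of weight $1/(mn') = 1/M$, and split each target $y_j$ into $m'$ identical sub-targets $y_j^{(1)}, \dots, y_j^{(m')}$, each of weight $1/M$. The auxiliary problem then has $mn' = M$ sources and $nm' = M$ targets all of common weight $1/M$, with cost $c(x_i^{(k)}, y_j^{(\ell)}) := c(x_i, y_j)$.

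The next step is a cost-preserving correspondence between the two problems. Given any transport plan $P$ on the original problem, the matrix $\tilde P_{(i,k),(j,\ell)} := P_{ij}/(m'n')$ has row and column sums equal to $1/M$ (a direct check using $\sum_j P_{ij} = 1/m$) and the same total cost. Conversely, given any auxiliary plan $\tilde P$, the collapsed matrix $P_{ij} := \sum_{k,\ell} \tilde P_{(i,k),(j,\ell)}$ is a feasible plan on the original problem with the same cost. Hence the two problems share the same optimal value.

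Now I would apply Birkhoff--von Neumann to the auxiliary problem, which has equal source and target cardinalities and uniform weights, to obtain an optimal plan $\tilde P^*$ realized by a bijection between sub-sources and sub-targets. Collapsing $\tilde P^*$ back produces an optimal plan $P^*$ on the original problem. Each source $x_i$ has only $n'$ sub-copies, and under the bijection each sub-copy is sent to a single sub-target; hence $P^*_{i\,\cdot}$ is supported on at most $n' = n/\gcd(m,n)$ distinct original targets. The symmetric argument on columns shows that each target receives mass from at most $m' = m/\gcd(m,n)$ distinct original sources.

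I do not foresee a substantial obstacle: the one point that needs care is verifying that the ``split uniformly / sum out sub-indices'' correspondence preserves both marginals and cost, but this reduces to a short bookkeeping computation. The conceptual content of the argument is entirely carried by the observation that subdividing sources and targets into equal pieces converts the unequal-cardinality transport problem into a genuine bistochastic one without changing the set of attainable costs.
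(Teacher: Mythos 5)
Your argument is correct and is exactly the route the paper indicates: replacing each source by $n/\gcd(m,n)$ and each target by $m/\gcd(m,n)$ coincident ``dummy'' copies of weight $1/\mathrm{lcm}(m,n)$ and then invoking Birkhoff--von Neumann on the resulting square, uniformly weighted problem. Your cost-and-marginal-preserving correspondence is the bookkeeping the paper leaves implicit, and the collapse of the bijective auxiliary plan gives precisely the claimed bounds.
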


This follows from artifically replacing points by multiple `dummy' points all located in the same spot and applying Birkhoff-von Neumann. For example (see Fig. \ref{fig:one}), when sending mass from $m=20$ sources to $n=30$ targets, the result guarantees that this can be done in a way that sends mass from each source to at most 3 different targets and where each target receives mass from at most 2 different sources. The result does not say anything when $m$ and $n$ are coprime.
\begin{center}
    \begin{figure}[h!]
    \begin{tikzpicture}
\node at (0,0) {\includegraphics[width=0.35\textwidth]{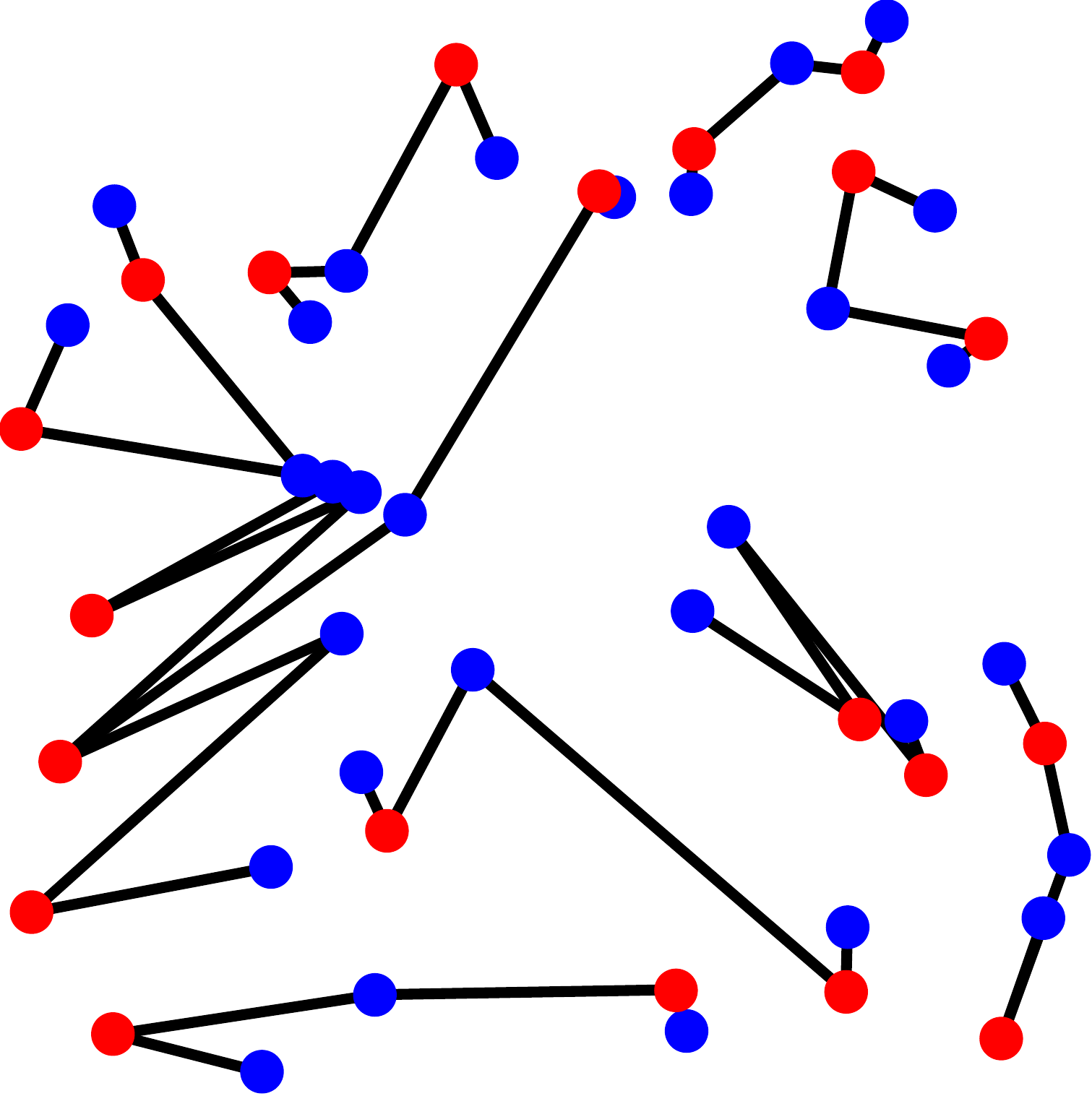}};
\node at (6,0) {\includegraphics[width=0.35\textwidth]{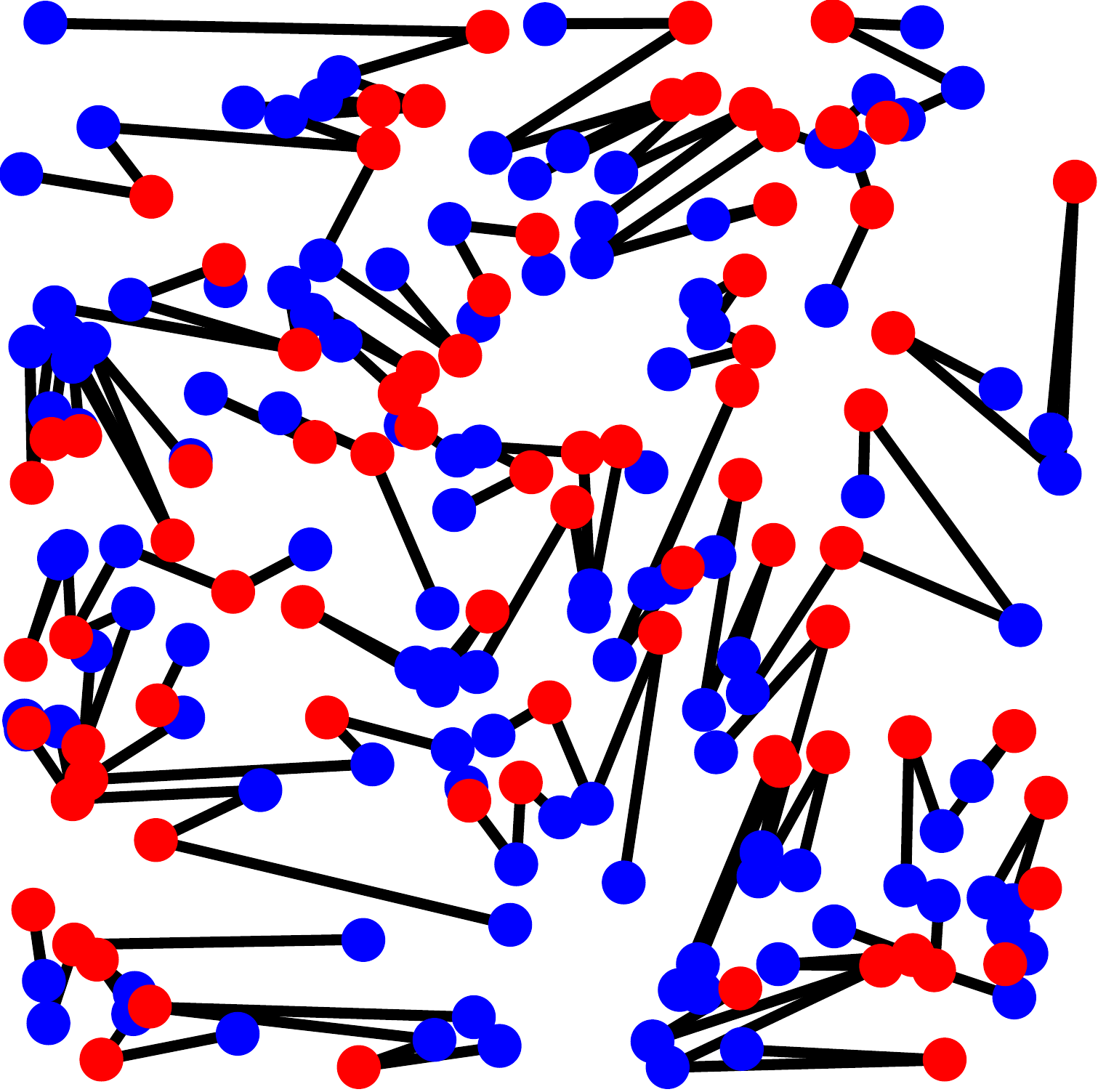}};
    \end{tikzpicture}
    \caption{$m=2\ell$ source points (red) are mapped to $n = 3\ell$ targets (blue) to minimize $W_1$ (left: $\ell=10$, right: $\ell=40$). Independent of $\ell$, each source sends mass to at most 3 different targets, each target receives mass from at most 2 different sources. }
    \label{fig:one}
    \end{figure}
\end{center}

\section{Kantorovich Rigidity}
\subsection{Rigidity.} The optimal transport plan is symmetric: one may think of sending mass from a probability measure $\mu$ to a probability measure $\nu$ or, equivalently, mass from $\nu$ to $\mu$. We can thus assume, without loss of generality, that $m \leq n$. In this setting, every source has a `large' weight $1/m$ while every target has `smaller' weight $1/n$. It follows that each source has to send mass to multiple different targets and it's easy to see that at least $\left\lceil n/m\right\rceil$ are needed since each source must deplete all of its mass. It appears as if the optimal transport plan will send mass to a number of targets that is very rigid and close to this trivial lower bound of $\left\lceil n/m\right\rceil$.
 \begin{center}
    \begin{figure}[h!]
    \begin{tikzpicture}
\node at (0,0) {\includegraphics[width=0.45\textwidth]{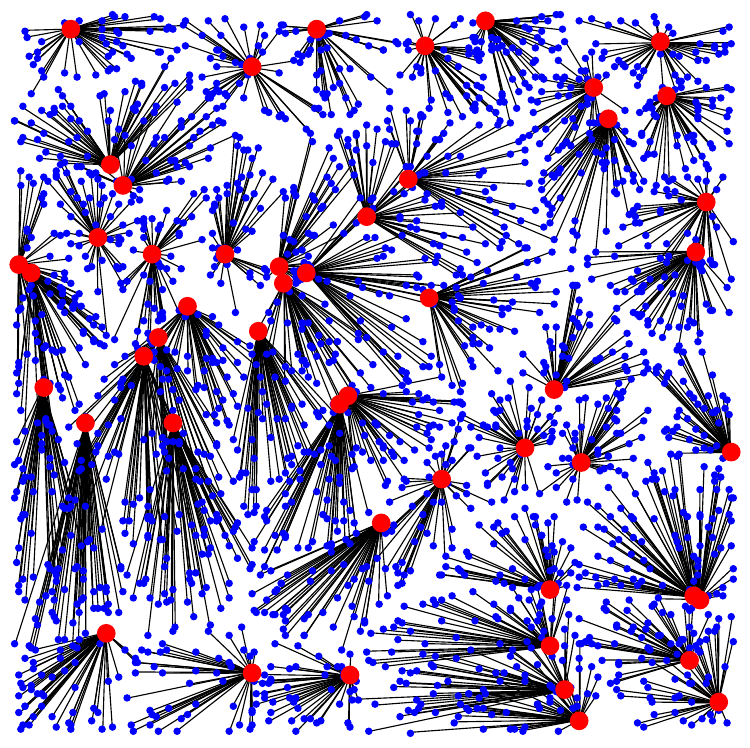}};
\node at (6,0) {\includegraphics[width=0.45\textwidth]{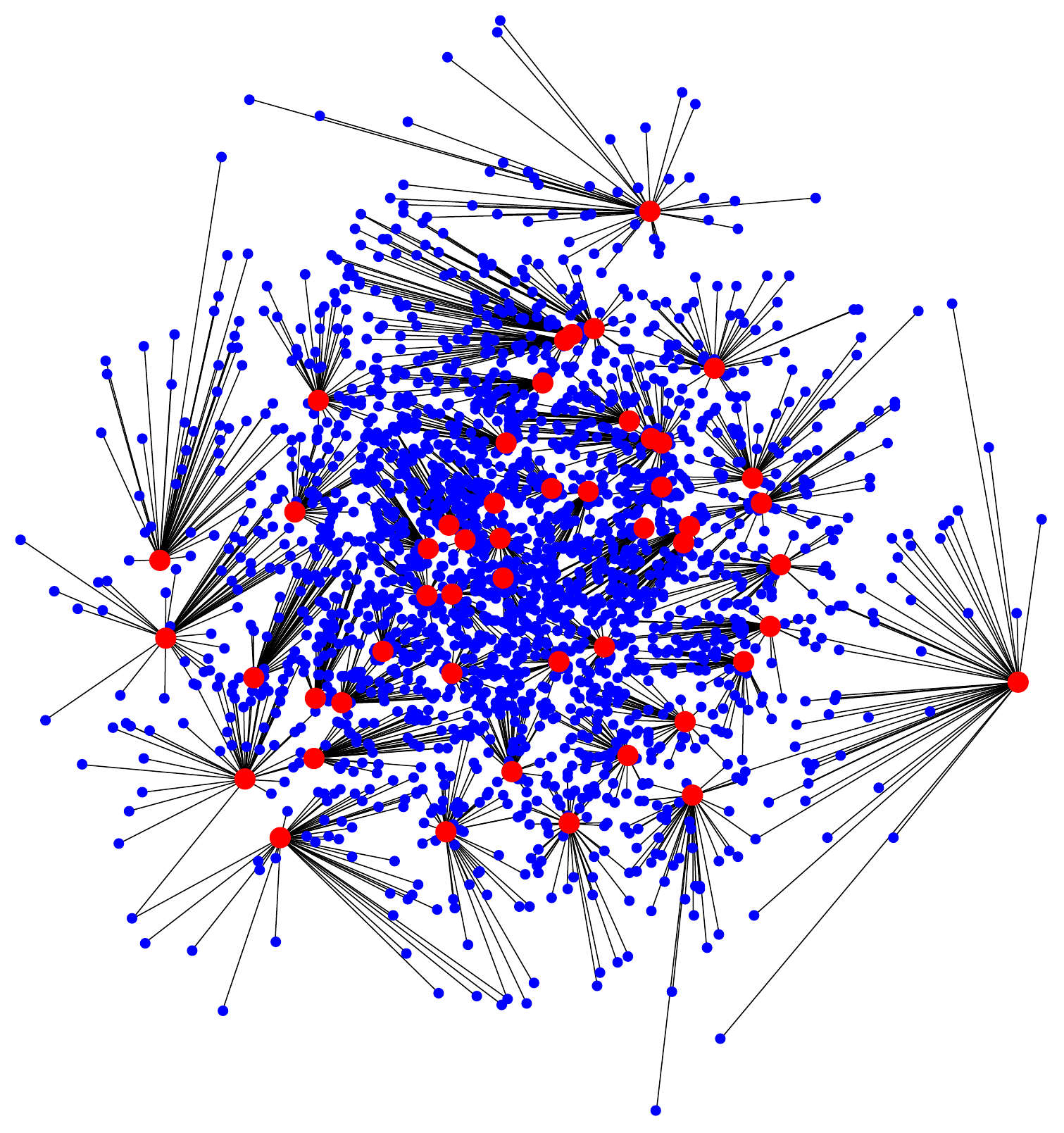}};
    \end{tikzpicture}
    \caption{50 sources are mapped to $2222$ targets (left: iid samples in $[0,1]^2$, right: Gaussian) to minimize the $W_2$ cost. Each source is sent to either $\left\lceil 2222/50\right\rceil = 45$, 46 or 47 targets. }
    \label{fig:two}
    \end{figure}
\end{center}

One such example is shown in Fig. \ref{fig:two}. The phenomenon appears to be generic, it is easy to come up with more examples. One might be tempted to ask whether there might not be a general principle at play. This appears to be a fascinating problem.
\begin{quote}
    \textbf{Question.} For equally weighted points and $m \leq n$, does the optimal transport plan always send mass from each source to a number of targets that is either $\left\lceil n/m\right\rceil$ or just a little bit larger than that?
\end{quote}
This appears to be the case when doing numerical experiments and it seems to be at least `generic' in the sense that counterexamples (if any) appear to be rare.

\subsection{Main Result.} We can now describe our main result. It was discovered empirically when looking at transport plans from $m=7$ sources to $n=2000$ targets: we found that the optimal transport plan tends to send mass from each source to somewhere between $\left\lceil n/m\right\rceil = 286$ and 289 different targets. This remained true even after replacing the transport costs $c(x_i, y_j)$ by random numbers.
The trivial lower bound of 286 is not surprising; we were somewhat surprised by how accurately it seems to predict the true behavior. This led to the following result.

\begin{thm} If the transport costs $c(x_i, y_j)$ from $x_i$ to $y_j$ satisfy
$$\forall~i \neq j, k \neq \ell \qquad \quad c(x_i, y_k) + c(x_j, y_{\ell}) \neq c(x_i, y_{\ell}) + c(y_j, x_k),$$
and $m \leq n$, then every optimal transport plan satisfies, for all $1 \leq i\leq m$,
    \begin{equation}
      \left\lceil \frac{n}{m} \right\rceil \leq  \# \left\{1 \leq j \leq n: x_i~\mbox{sends mass to}~y_j \right\}  \leq \left\lfloor \frac{n}{m}\right\rfloor + m -1.      
    \end{equation}
    Moreover, the average source sends mass to relatively few targets
   \begin{equation}
       \left\lceil \frac{n}{m} \right\rceil \leq \frac{1}{m} \sum_{i=1}^{m} \# \left\{1 \leq j \leq n: x_i~\mbox{sends mass to}~y_j \right\} \leq \frac{n}{m} + \sqrt{n}.    
   \end{equation}
   and the average target receives mass from relatively few sources
   \begin{equation}
        \frac{1}{n} \sum_{j=1}^{n} \# \left\{1 \leq i \leq m: x_i~\mbox{sends mass to}~y_j \right\} \leq 1+ \frac{m}{\sqrt{n}}  
   \end{equation}
\end{thm}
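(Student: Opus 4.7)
The plan is to pass to the dual linear program. Let $(u, v) \in \mathbb{R}^m \times \mathbb{R}^n$ be a pair of optimal dual potentials, satisfying $u_i + v_j \leq c_{ij}$ for all $i, j$, and define the \emph{equality graph}
$$G \;=\; \{(i, j) \in [m]\times[n] : u_i + v_j = c_{ij}\}.$$
By complementary slackness, every optimal transport plan $P$ is supported in $G$, so any structural constraint on $G$ is inherited by $\mathrm{supp}(P)$. The key observation is that $G$ is $C_4$-free under the generic hypothesis: if $(i,k), (i,\ell), (j,k), (j,\ell)$ with $i \neq j$, $k \neq \ell$ all belonged to $G$, then summing the four dual equalities in two ways would yield $c_{ik} + c_{j\ell} = c_{i\ell} + c_{jk}$, contradicting the hypothesis. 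Hence the support of every optimum is a $C_4$-free bipartite subgraph of $[m] \times [n]$.

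The pointwise upper bound follows by a short mass-accounting argument. Fix row $i$, put $s_i = d$, and let $J_i = \{j : P_{ij} > 0\}$. These $d$ columns have total capacity $d/n$, of which row $i$ itself supplies $1/m$; the remaining $d/n - 1/m$ units must be provided by the other $m-1$ rows. By $C_4$-freeness, each of those rows has at most one column in $J_i$ in its own support (otherwise two common columns plus two rows would form a $C_4$), and each column contribution is bounded above by the column capacity $1/n$. Therefore $d/n - 1/m \leq (m-1)/n$, which rearranges to $d \leq (m-1) + n/m$; integrality gives $s_i \leq \lfloor n/m \rfloor + m - 1$. The matching lower bound $s_i \geq \lceil n/m \rceil$ is the trivial capacity inequality.

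For the two average bounds I would invoke the classical K\H{o}v\'ari--S\'os--Tur\'an / Reiman estimate for $C_4$-free bipartite graphs. Double counting the pairs of rows that share a common column yields $\sum_j \binom{t_j}{2} \leq \binom{m}{2}$, hence $\sum_j t_j^2 \leq m(m-1) + E$ where $E = |\mathrm{supp}(P)|$. Cauchy--Schwarz then gives $E^2 \leq n(m(m-1) + E)$, so $E \leq n + \sqrt{nm(m-1)}$. Dividing by $m$ and by $n$:
$$\frac{1}{m}\sum_i s_i = \frac{E}{m} \leq \frac{n}{m} + \sqrt{\tfrac{n(m-1)}{m}} \leq \frac{n}{m} + \sqrt{n}, \qquad \frac{1}{n}\sum_j t_j = \frac{E}{n} \leq 1 + \sqrt{\tfrac{m(m-1)}{n}} \leq 1 + \frac{m}{\sqrt{n}}.$$

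The main step --- and the only place where ingenuity is really required --- is identifying the equality graph $G$ as the right object and recognizing that the seemingly arbitrary $4$-point generic hypothesis is precisely the condition which forces $G$ to be $C_4$-free. Once that structural fact is in hand, the pointwise estimate is a one-line mass computation and the averages fall out of a one-line K\H{o}v\'ari--S\'os--Tur\'an bound.
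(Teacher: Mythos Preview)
Your proof is correct. The overall architecture mirrors the paper's: establish that the support of any optimum is $C_4$-free, then deduce (1) by a mass/injectivity count and (2)--(3) from the double-counting inequality $\sum_j \binom{\ell_j}{2} \leq \binom{m}{2}$ plus Cauchy--Schwarz.

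The one genuine difference is how you obtain $C_4$-freeness. The paper proves a ``Non-Crossing Lemma'' by direct perturbation: if an optimal plan had all four entries $P_{ik}, P_{i\ell}, P_{jk}, P_{j\ell}$ positive, one shifts $\varepsilon$ mass around the $4$-cycle in whichever direction strictly decreases cost, contradicting optimality. You instead pass to optimal dual potentials and observe that a $C_4$ in the equality graph forces $c_{ik}+c_{j\ell}=c_{i\ell}+c_{jk}$ by summing the four tight constraints two ways. Your route is more conceptual (it explains \emph{why} the $4$-point hypothesis is exactly the right obstruction) and yields the slightly stronger statement that the entire equality graph --- not just the support of a given optimum --- is $C_4$-free; the paper's perturbation argument is more elementary in that it needs no LP duality machinery. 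Your pointwise step $d/n - 1/m \leq (m-1)/n$ is a clean repackaging of the paper's partition into fully-filled and partially-filled targets, and your Cauchy--Schwarz inequality $E^2 \leq n(m(m-1)+E)$ is equivalent to (and marginally sharper than) the paper's version, which applies Cauchy--Schwarz to $\sum_j(\ell_j-1)$ and uses $(\ell_j-1)^2 \leq 2\binom{\ell_j}{2}$.
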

\textit{Remarks.}
\begin{enumerate}
    \item The assumption on the transport costs is generically satisfied. If the assumption is not satisfied then arbitrarily small random perturbations of the transport costs is guaranteed to then satisfy it almost surely.
    \item  The lower bound $\left\lceil n/m \right\rceil$ in (1) is trivially optimal. 
    \item The upper bound in (1) implies (2) when $m \leq \sqrt{n}$.
    \item Inequality (3) proves generic rigidity for targets. When $n \gg m^2$, then most targets get their mass from a single source.
\end{enumerate}

It is not clear to us how far these upper bounds are from the truth. We observe in examples that the upper bound $m-1$ in (1) can be replaced by something that is either $\mathcal{O}(1)$ or at least seems to grow rather slowly in $m$ and $n$. It would be of great interest to understand this better. It is conceivable that our results are optimal but the situation in which they are sharp are rather rare -- in that case it would be interesting to understand what the behavior would be like in the `generic' setting for a suitable definition of `generic'. It is also quite possible that much stronger results are true. Finally, we note that it could be interesting to understand the case where both sources and targets are i.i.d. samples from the same probability distribution and whether tools from probability theory allow for a refined analysis (tight concentration, a Central limit Thorem, ...) of that case.

\section{Proof of the Theorem}

\subsection{A Non-Crossing Lemma}
The purpose of this section is to prove a simple non-crossing Lemma that seems like it could have already appeared in the Literature, possibly many times, but we were unable to locate it.

\begin{center}
    \begin{figure}[h!]
\begin{tikzpicture}[scale=1.3]
    \filldraw (0,0) circle (0.07cm);
        \filldraw (0,1) circle (0.07cm);
      \filldraw (2,0) circle (0.07cm);
        \filldraw (2,1) circle (0.07cm);  
\draw [->, thick] (0,0) -- (1.8, 0);
  \draw [->, thick] (0,0) -- (1.8, 0.9);      
 \draw [->, thick] (0,1) -- (1.8, 0.1);
  \draw [->, thick] (0,1) -- (1.8, 1);   
  \node at (1, 1.15) {\Large $a$};
    \node at (0.75, 0.8) {\Large $b$};
        \node at (0.3, 0.32) {\Large $c$};
    \node at (1, -0.15) {\Large $d$};
    \node at (-0.3, 0) {$p_2$};
       \node at (-0.3, 1) {$p_1$};
         \node at (2.3, 0) {$p_3$};
       \node at (2.3, 1) {$p_4$};
\end{tikzpicture}
\caption{This kind of transport (sub-)plan can never appear.}
\label{fig:forbidden}
    \end{figure}
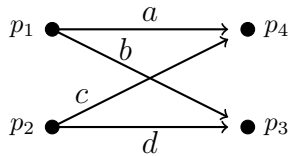
\end{center}

\begin{lemma}
    Suppose we have $2$ sources and $2$ targets with transport costs a, b, c, and d as shown in Fig. \ref{fig:forbidden}. If $a+d \neq b+c$, then the optimal solution does not transport mass from each source to each target.
\end{lemma}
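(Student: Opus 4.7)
The plan is a standard cycle-swap argument on the transportation polytope. Suppose for contradiction that the optimal plan $P$ sends positive mass along all four edges, i.e.\ the submatrix
\[
\begin{pmatrix} P_{1,3} & P_{1,4} \\ P_{2,3} & P_{2,4} \end{pmatrix} = \begin{pmatrix} b\text{-edge} & a\text{-edge} \\ d\text{-edge} & c\text{-edge} \end{pmatrix}
\]
has all strictly positive entries. Let $\varepsilon := \min\{P_{1,3}, P_{2,4}, P_{1,4}, P_{2,3}\} > 0$.

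First I would introduce the two perturbations $P^{+}$ and $P^{-}$ obtained by shifting $\pm \varepsilon$ around the 4-cycle $p_1 \to p_3 \to p_2 \to p_4 \to p_1$: namely, $P^{+}$ subtracts $\varepsilon$ from the $b$- and $c$-edges and adds $\varepsilon$ to the $a$- and $d$-edges, while $P^{-}$ does the opposite. A direct check shows that both $P^{+}$ and $P^{-}$ are nonnegative (by choice of $\varepsilon$) and have the same row and column marginals as $P$, so they are admissible transport (sub-)plans.

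Next I would compute the change in transport cost. The cost of $P^{+}$ exceeds that of $P$ by exactly $\varepsilon(a+d-b-c)$, and the cost of $P^{-}$ exceeds that of $P$ by exactly $-\varepsilon(a+d-b-c)$. Since $a+d \neq b+c$ by hypothesis, one of these two quantities is strictly negative, producing an admissible plan strictly cheaper than $P$. This contradicts optimality of $P$, completing the argument.

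There is no real obstacle here; the only thing to be careful about is restricting to the $2\times 2$ submatrix and observing that the 4-cycle swap is a purely \emph{local} modification that leaves every other entry of the global transport matrix untouched, so the global row/column sum constraints remain satisfied. Everything else is a one-line calculation.
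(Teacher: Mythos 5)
Your proof is correct and follows essentially the same cycle-swap argument as the paper: perturb the mass by $\pm\varepsilon$ around the 4-cycle, note the two cost changes $\pm\varepsilon(a+d-b-c)$ are opposite in sign and nonzero, and conclude one perturbation strictly improves the plan. The only cosmetic difference is that the paper takes $\varepsilon$ to be half the minimum of the four masses, while you take the full minimum; both choices preserve nonnegativity and the marginals, so the argument goes through identically.
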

\begin{proof} Suppose we are given an optimal transport plan which contains such a configuration. We denote the amount of mass being transported from $p_i$ (where $i \in \left\{1,2\right\}$ to $p_j$ (where $j \in \left\{3,4\right\}$) by $m_{ij}$. Assume all 4 numbers are positive and let 
$$ \varepsilon = \frac{1}{2} \min\left\{ m_{13}, m_{14}, m_{23}, m_{24} \right\}.$$
Consider the following action: decrease $m_{13}$ by $\varepsilon$ and increase $m_{14}$ by $\varepsilon$ while simultaneously decreasing $m_{24}$ by $\varepsilon$ and increasing $m_{23}$ by $\varepsilon$. This remains a valid transport plan insofar as all sources remain depleted and all targets remain filled at a capacity. The change in transport cost is given by
$$ -b \varepsilon + a\varepsilon - c\varepsilon + d \varepsilon = \varepsilon(a - b - c + d).$$
We also consider a different action: decrease $m_{14}$ by $\varepsilon$ and increase $m_{13}$ by $\varepsilon$ while simultaneously decreasing $m_{23}$ by $\varepsilon$ and increasing $m_{24}$ by $\varepsilon$. This leads to a change in transport cost
$$ -a \varepsilon + b\varepsilon - d\varepsilon + c \varepsilon = \varepsilon(-a + b + c -d).$$
We note that this has the exact opposite sign of the first action. Thus, if
$$ a + d \neq b + c,$$
then both these actions have nonzero effect and, since they are of opposite sign, one of them will lead to an improvement. This contradicts the assumption that the transport plan was optimal and proves the Lemma.
\end{proof}

\subsection{Proof of the Theorem}
\begin{proof}
There are $m$ sources and $n$ targets and $m \leq n$.  We introduce, for $1 \leq i \leq m$, the number $t_i$ of targets that $x_i$ sends mass to:   
$$ t_i = \# \left\{1 \leq j \leq n: x_i~\mbox{sends mass to}~y_j \right\}.$$
It is clear that each source has to send mass to at least $\left\lceil n/m \right\rceil$ different targets which provides a lower bound on $t_i$.
As for the upper bound, we partition the points that $x_i$ sends mass to into two sets, $A$ and $B$: those that get filled to capacity by $x_i$,
$$ A = \left\{1 \leq j \leq n: x_i~\mbox{sends}~\frac{1}{n}~\mbox{mass to}~ y_j \right\},$$
and the remaining points in the set $B$. We have $t_i = \# A + \# B$. By mass considerations, we have $\# A \leq \left\lfloor n/m \right\rfloor$. It remains to bound the number of points in $B$. Any such point receives less than $1/n$ mass from $x_i$ which means it has to receive additional mass from other sources. If $y_s, y_t \in B$ are two distinct targets in $B$, then they both receive mass from $x_i$ and hence cannot both receive mass from $x_{\ell}$ where $\ell \neq i$ because that is forbidden by the Lemma: two targets ($y_s$ and $y_t$) simultaneously receiving mass from two sources ($x_i$ and $x_{\ell}$). This means that we can map each $y_s \in B$ injectively to a non-empty subset of $\left\{x_1, \dots, x_{i-1}, x_{i+1}, x_m\right\}$ (this subset being the other sources that $y_s$ receives mass from) and thus $\# B \leq m-1.$ 
Altogether we deduce
$$ t_i = \# A + \#B \leq \left\lfloor\frac{n}{m} \right\rfloor + m - 1.$$
This proves the first part of the statement. It will be convenient to prove the third part next. For the third part, we introduce, for $1 \leq j \leq m$, the number of $\ell_j$ of different sources sending mass to $y_j$
$$ \ell_j = \# \left\{1 \leq i \leq m: x_i~\mbox{sends mass to}~y_j \right\}.$$
We have $\ell_j \geq 1$ since every target receives mass from somewhere. We now use the non-crossing Lemma once more as follows: a pair of sources $\left\{x_a, x_b\right\}$ can only send mass to at most one single target; if they were to send mass simultaneously to two different targets, this would violate the non-crossing Lemma. If a target $y_j$ receives mass from $\ell_j \geq 2$ sources, then we can find $\binom{\ell_j}{2}$ different pairs that send mass to $y_j$. Each pair appears at most once and there are $\binom{m}{2}$ pairs in total, thus
$$ \sum_{j=1}^{n} \binom{\ell_j}{2} \leq \binom{m}{2},$$
where we use the convention $\binom{1}{2} = 0$.
The Cauchy-Schwarz inequality implies
\begin{align*}
    \sum_{j=1}^{n} \ell_j &= n +  \sum_{j=1}^{n} (\ell_j - 1) \leq n + \sqrt{n} \left( \sum_{j=1}^{n} (\ell_j - 1)^2 \right)^{1/2} \\
    &\leq n + \sqrt{n} \left( \sum_{j=1}^{n}  2\cdot  \binom{\ell_j}{2} \right)^{1/2} \leq n + \sqrt{n} \cdot m
\end{align*}
implying the third statement. 
This then implies the second statement since
\begin{align*}
\sum_{j=1}^{n}  \ell_j &= \sum_{j=1}^{n} \# \left\{1 \leq i \leq m: x_i~\mbox{sends mass to}~y_j \right\}\\
&= \sum_{j=1}^{m}\# \left\{1 \leq j \leq n: x_i~\mbox{sends mass to}~y_j \right\}.
\end{align*}
\end{proof}

\vspace{-15pt}

\end{document}